\title{A note on extending actions of infinitesimal group schemes}
\author[D. HOFFMANN]{Daniel Hoffmann$^{\dagger}$}
\thanks{2010 \textit{Mathematics Subject Classification}. Primary 13N15; Secondary 14L15.}
\thanks{\textit{Key words and phrases}. Hasse-Schmidt derivations, group scheme actions.}
\address{$^{\dagger}$Instytut Matematyczny\\
Uniwersytet Wroc{\l}awski\\
Wroc{\l}aw\\
Poland}
\email{daniel.hoffmann@math.uni.wroc.pl}
\author[P. KOWALSKI]{Piotr Kowalski$^{\spadesuit}$}
\thanks{$^{\spadesuit}$Supported by NCN grant 2012/07/B/ST1/03513}
\address{$^{\spadesuit}$Instytut Matematyczny\\
Uniwersytet Wroc{\l}awski\\
Wroc{\l}aw\\
Poland}
 \email{pkowa@math.uni.wroc.pl} \urladdr{http://www.math.uni.wroc.pl/\textasciitilde pkowa/ }
 \DeclareMathOperator{\aut}{Aut} \DeclareMathOperator{\id}{id}
 \DeclareMathOperator{\fr}{Fr}
\DeclareMathOperator{\ch}{char}
\DeclareMathOperator{\arc}{Arc} 
\DeclareMathOperator{\coli}{\underrightarrow{\lim}}
\DeclareMathOperator{\ev}{ev}
\DeclareMathOperator{\htt}{ht}\DeclareMathOperator{\spec}{Spec}
\DeclareMathOperator{\evv}{ev}\DeclareMathOperator{\pgl}{PGL}
\DeclareMathOperator{\Der}{Der}
\newtheorem{theorem}{Theorem}[section]
\newtheorem{prop}[theorem]{Proposition}
\newtheorem{lemma}[theorem]{Lemma}
\newtheorem{cor}[theorem]{Corollary}
\theoremstyle{definition}
\newtheorem{definition}[theorem]{Definition}
\newtheorem{example}[theorem]{Example}
\newtheorem{remark}[theorem]{Remark}
\newtheorem{question}[theorem]{Question}
\begin{document}

\newcommand{\twoc}[3]{ {#1} \choose {{#2}|{#3}}}
\newcommand{\thrc}[4]{ {#1} \choose {{#2}|{#3}|{#4}}}
\newcommand{\Zz}{{\mathds{Z}}}
\newcommand{\Ff}{{\mathds{F}}}
\newcommand{\Cc}{{\mathds{C}}}
\newcommand{\Rr}{{\mathds{R}}}
\newcommand{\Nn}{{\mathds{N}}}
\newcommand{\Qq}{{\mathds{Q}}}
\newcommand{\Kk}{{\mathds{K}}}
\newcommand{\Pp}{{\mathds{P}}}
\newcommand{\ddd}{\mathrm{d}}
\newcommand{\Aa}{\mathds{A}}
\newcommand{\dlog}{\mathrm{ld}}
\newcommand{\ga}{\mathbb{G}_{\rm{a}}}
\newcommand{\gm}{\mathbb{G}_{\rm{m}}}
\newcommand{\gaf}{\widehat{\mathbb{G}}_{\rm{a}}}
\newcommand{\gmf}{\widehat{\mathbb{G}}_{\rm{m}}}

\maketitle
\begin{abstract}
We prove that iterative derivations on projective line cannot be expanded to iterative Hasse-Schmidt derivations, in the case when the iterativity rule is given by a non-algebraic formal group.
\end{abstract}

\section{Introduction}

This paper deals with the problem whether for a derivation $\partial$, there is a \emph{Hasse-Schmidt derivation} $(\partial_n)_{n\in \Nn}$ such that $\partial=\partial_1$. We recall that a Hasse-Schmidt derivation on a ring $R$ (see \cite{HS}) is a sequence
$$\mathbb{D}=(D_{i}:R\to R)_{i\in \Nn}$$
satisfying the following properties:
\begin{itemize}
\item $D_{0}=\id_R$,

\item each $D_{i}$ is additive,

\item for any $x,y\in R$ we have
$$D_{i}(xy)=\sum_{j+k=i}D_{j}(x)D_{k}(y).$$
\end{itemize}
A Hasse-Schmidt derivation $\mathbb{D}$ is \emph{iterative} if for all $i,j\in \Nn$ we have
$$D_i\circ D_j={i+j\choose i}D_{i+j}.$$

For a function $f:R\to R$ and a natural number $n$, we denote by $f^{(n)}$ the $n$-th compositional power of $f$. It is well-known that any derivation $\partial$ on a $\Qq$-algebra expands (uniquely) to an iterative Hasse-Schmidt derivation by the formula $\left(\frac{\partial^{(i)}}{i!}\right)_{i\in \Nn}$. Matsumura proved that the same (without uniqueness) is still true in the case of fields of positive characteristic $p$ for derivations satisfying the (necessary) condition $\partial^{(p)}=0$  \cite{Mats1}. In Matsumura's terminology, such a derivation $\partial$ is (\emph{strongly}) \emph{integrable}.

One may wonder why to consider the iterativity condition of such a specific form (although the characteristic $0$ example gives a rather strong motivation). It was noticed by Matsumura that actually the iterativity condition as above is governed by the additive group law $X+Y$. Since in characteristic $0$, any (one-dimensional) formal group is isomorphic to the additive one, it is a good choice indeed. However, in the case of positive characteristic there are many more formal group laws and it is an interesting question whether the corresponding derivations are integrable. A multiplicative version of Matsumura's theorem was proved by Tyc \cite{Tyc}, where the condition $\partial^{(p)}=0$ is replaced with the (necessary again) condition $\partial^{(p)}=\partial$.

In \cite{HK1}, the authors deal with a more general problem whether iterative \emph{$m$-truncated} ($m$ is a positive integer) Hasse-Schmidt derivations are integrable.  An iterative $m$-truncated Hasse-Schmidt derivation is a sequence $(\partial_i)_{i<p^m}$ satisfying the higher Leibnitz rules and the appropriate iterativity conditions, see \cite[Def. 2.11]{HK1}. Since a one-truncated additively iterative Hasse-Schmidt derivation is equivalent to a standard derivation $\partial$ satisfying the condition $\partial^{(p)}=0$ (similarly in the multiplicative case, where the necessary condition is $\partial^{(p)}=\partial$), this is a natural generalization. In \cite{HK1}, we extend the results of Matsumura and Tyc to the case of an arbitrary truncation (in the additive case, such a generalization is implicit in the work of Ziegler \cite{Zieg2}). A certain class of higher-dimensional commutative affine algebraic groups is treated by the first author in \cite{Hoff1}. In this paper, we focus on the one-dimensional case and we comment briefly on the higher-dimensional cases in Section \ref{highdim}.

We abbreviate the term ``additively (resp. multiplicatively) iterative Hasse-Schmidt derivation'' by ``$\ga$-derivation'' (resp. ``$\gm$-derivation''). Similarly, for any formal group law $F$, we have the notion of \emph{$F$-derivation} (see Definition \ref{defefiter}). In the truncated case, we have the corresponding notions of $\ga[m]$-derivations, $\gm[m]$-derivations and generally $F[m]$-derivations (see Definition \ref{deffitertrun}).

For other (than additive or multiplicative) formal group laws, the following question remains (as far as we know) open.
\begin{question}\label{mainq}
Let $F$ be a formal group law and $\mathbb{D}$ be an $F[m]$-derivation on a field $K$. Does $\mathbb{D}$ expand to an $F$-derivation on $K$?
\end{question}
More geometrically, the above question asks whether a (formal) group scheme action of a finite truncation of $F$ on a field extends to an action of the entire formal group (law) $F$ (see Section \ref{gpscheme1}). A natural test case is to consider the actions on a field of rational functions $K=k(t)$, equivalently on the generic point of projective line over a base field $k$. In this paper, we give a \emph{negative} answer to the question which is similar to Question \ref{mainq} and concerns actions on the entire projective line (rather than on its generic point). We show that for a formal group law $F$ which is neither isomorphic to the additive one nor to the multiplicative one, derivations on projective line cannot be expanded to $F$-derivations (Corollary \ref{lastcor}).

We would like to point out that for a derivation, being integrable (with a given iterativity rule) is related to the model-theoretic properties of \emph{existentially closed} fields with such derivations (see \cite{Zieg2}, \cite{K3} or \cite{HK}).

This paper is organized as follows. In Section \ref{gpscheme}, we recall the connection between Hasse-Schmidt derivations and group scheme actions. In Section \ref{secproj}, we prove the main theorem of this paper (Theorem \ref{htrtl}) about Hasse-Schmidt derivations on some algebraic varieties (including projective spaces). In Section \ref{sectrun}, we speculate about a possible approach to Question \ref{mainq}.

\section{Group scheme actions}\label{gpscheme}

In this section, we interpret the notion of a (truncated) Hasse-Schmidt derivation (abbreviated as \emph{HS-derivation} in the sequel) in terms of group scheme actions, recall the automorphism group functor and clarify the correspondence between the Lie algebra actions and the group scheme actions. We fix a field $k$ of positive characteristic $p$ and a $k$-algebra $R$. We also fix a complete local $k$-algebra $\mathcal{R}$ with the maximal ideal $I$ and a positive integer $m$.

\subsection{Definitions and examples of $F$-derivations and $f$-derivations}\label{secdefin}
For a more complete discussion of the notions introduced in this section, we refer the reader to Section 2.1 and Section 2.2 in \cite{HK1}.

We recall that a sequence of maps $(\partial_n:R\to R)_{n\in \Nn}$ is an HS-derivation on $R$ over $k$ (i.e. $\partial_n(k)=0$ for $n>0$) if and only if the corresponding map
$$\partial_X:R\to R\llbracket X\rrbracket,\ \ \ \partial_X(r)=\sum_{n=0}^{\infty}\partial_n(r)X^n$$
is a $k$-algebra homomorphism and $\partial_0$ is the identity map.

Let us fix $F\in k\llbracket X,Y\rrbracket$, a formal group law over $k$, that is $F$ satisfying
$$F(X,0)=X=F(0,X),\ \ \ F(F(X,Y),Z)=F(X,F(Y,Z)).$$
\begin{definition}\label{defefiter}
An HS-derivation $(\partial_n)_{n\in \Nn}$ on $R$ over $k$ is called \emph{$F$-iterative} if the following diagram is commutative
\begin{equation*}
 \xymatrix{R \ar[rr]^{\partial_Y} \ar[d]_{\partial_Z} & & R\llbracket Y\rrbracket \ar[d]^{\partial_X\llbracket Y\rrbracket} \\
	   R\llbracket Z\rrbracket \ar[rr]^{\ev_{F}} & & R\llbracket X,Y\rrbracket .}
\end{equation*}
\end{definition}
\noindent
We shorten the long phrase ``$F$-iterative HS-derivation over $k$'' to \emph{$F$-derivation}.

For a one-dimensional algebraic group $G$ over $k$ we use the term \emph{$G$-derivation} rather than $\widehat{G}$-derivation. In particular, a $\ga$-derivation is the same as an iterative HS-derivation.

We also consider a truncated version of the notion of an $F$-derivation. Let $m$ be a positive integer and $v_m,w_m,u_m$ denote the ``$m$-truncated variables'' i.e.
$$R[v_m]=R[X]/(X^{p^m}),\ \ R[v_m,w_m,u_m]=R[X,Y,Z]/(X^{p^m},Y^{p^m},Z^{p^m}).$$
For simplicity, we will denote $v_m,w_m,u_m$ by $v,w,u$.
Let $f\in k[v,w]$ be \emph{$m$-truncated group law} i.e. $f$ satisfies
$$f(v,0)=v=f(0,v),\ \ \ f(f(v,w),u)=f(v,f(w,u)).$$
\begin{definition}\label{deffitertrun}
An $m$-truncated HS-derivation $(\partial_n)_{n<p^m}$ over $k$ is called \emph{$f$-iterative} if the following diagram is commutative
\begin{equation*}
 \xymatrix{R \ar[rr]^{\partial_w} \ar[d]_{\partial_u} & & R[w] \ar[d]^{\partial_v[w]} \\
	   R[u] \ar[rr]^{\ev_f} & & R[v,w].}
\end{equation*}
\end{definition}
\noindent
We shorten the long phrase ``$f$-iterative $m$-truncated HS-derivation over $k$'' to \emph{$f$-derivation}.

We will briefly sketch the construction of a canonical $F$-derivation. The reader is referred to Section 3.2 of \cite{HK1} for more details. Let $t$ denote a variable which will play a somewhat different role than the variables $X,Y,Z$. There is an $F$-derivation $(\partial_n)_{n\in \Nn}$ on $R\llbracket t\rrbracket$ over $R$ such that $\partial_n(t)$ is the coefficient of $X^n$ in $F(t,X)$ (see \cite[Prop. 3.4]{HK1}). We call this $F$-derivation \emph{canonical $F$-derivation}. For example, if $F=X+Y$ and $\partial$ is the canonical $F$-derivation on $k\llbracket t\rrbracket$, then $\partial_1=\frac{\ddd}{\ddd t}$ is the usual derivative with respect to the variable $t$ and for any $n\in \Nn$ we have
$$ \partial_n\left(\sum\limits_{i=0}^{\infty}a_i t^i\right) = \sum\limits_{i=0}^{\infty}a_{i+n}{i+n\choose n}t^{i}.$$

\subsection{Bijective correspondences}\label{gpscheme1}
The category of \emph{affine group schemes} over $k$ is the category opposite to the category of \emph{Hopf algebras} over $k$ \cite[Section 1.4]{Water} (or it is the category of representable functors from $k$-algebras to groups, see \cite[Section 1.2]{Water}). A \emph{truncated group scheme} \cite{Chase2} over $k$ is an affine group scheme whose universe is of the form
$$\spec(k[v_{1,m_1},\ldots,v_{k,m_k}])=\spec\left(k[X_1,\ldots,X_k]/\left(X_1^{p^{m_1}},\ldots,X_k^{p^{m_k}}\right)\right).$$
By \cite[Section 14.4]{Water} such group schemes coincide with finite connected group schemes (called \emph{infinitesimal} in \cite{Demazure} and \emph{local} in \cite{Manin}). In this paper we mostly consider  truncated group scheme structures on $\spec(k[v_m])=\spec\left(k[X]/\left(X^{p^m}\right)\right)$ (the ``ordinary'' case). Any $m$-truncated group law $f$ naturally gives a truncated group scheme $G_f$ corresponding to the Hopf algebra with the following comultiplication map
$$\ev_{f(v_m\otimes 1,1\otimes v_m)}:k[v_m]\to k[v_m]\otimes k[v_m].$$
To go to the opposite direction, one needs to choose a generator of the $k$-algebra $k[v_m]$ (clearly, there is a natural choice).
\begin{remark}\label{notrunzero}
If $\ch(k)=0$, then by a theorem of Cartier \cite[Section 11.4]{Water} all Hopf algebras over $k$ are reduced, so there is no reasonable notion of a truncated group scheme.
\end{remark}

The category of \emph{formal groups} over $k$ is the category opposite to the category of \emph{complete Hopf algebras} over $k$ (or the category of representable functors from complete $k$-algebras to groups, see \cite[Chapter VII]{Hazew}). Similarly as in the truncated case, there is a correspondence between formal groups and formal group laws. Note that a truncated group scheme is both an affine group scheme and a formal group.

Let $f$ be a truncated group law over $k$ giving a truncated group scheme $G_f$. There is a natural correspondence between:
\begin{enumerate}
\item[(T1)] The set of all $f$-derivations on $R$.

\item[(T2)] The set of all $k$-group scheme actions of $G_f$ on $\spec(R)$.
\end{enumerate}
\noindent
Since a formal group law $F$ can be approximated by a direct system of truncated group laws $F[m]$ (for a more precise statement, see Section \ref{gpschemetrun}), we get a similar correspondence between:
\begin{enumerate}
\item[(F1)] The set of all $F$-derivations on $R$.

\item[(F2)] The set of all systems of compatible $k$-group scheme actions of $G_{F[m]}$ on $\spec(R)$ for $m>0$.
\end{enumerate}
\noindent
For an arbitrary $k$-scheme $V$ it makes sense to talk about $f$-derivations or $F$-derivations on $V$ using the conditions of type $(2)$ above.
%\\
%It also makes sense to consider a more general notion of (truncated) HS-derivations (i.e. not necessarily iterative) on $k$-schemes, for equivalent definitions %(including...) we refer the reader to \cite[???]{KP5} {\bf correct!!}. It is worth mentioning that usual derivations on a scheme $V$ correspond to sections of %the tangent bundle $TV\to V$ and HS-derivations correspond to sections of arc spaces [{\bf references}].
\begin{remark}\label{arc}
Let $V$ be an arbitrary $k$-scheme.
\begin{enumerate}
\item[(i)] Since any HS-derivation on $R$ uniquely extends to the structure sheaf of $\spec(R)$, we get another equivalent condition (say, in the group scheme case and for an arbitrary $k$-scheme):
\begin{enumerate}
\item[(T2')] The set of all $f$-derivations on the structure sheaf of $V$.
\end{enumerate}

\item[(ii)] By the remark after Corollary 5.2 in \cite{KP5}, we have yet another equivalent condition:
\begin{enumerate}
\item[(T2'')] The set of all sections $V\to \arc_m(V)$ of the projection $\arc_m(V)\to V$ satisfying the commutative diagram as in the remark after Corollary 5.2 in \cite{KP5}.
\end{enumerate}
Note that $\arc_m$ is the $m$-th \emph{arc space} functor (see \cite{DeLo2000}) and $\arc_1$ coincides with the \emph{tangent space}.
\item[(iii)]  For a more general context (including and going beyond HS-derivations), the reader is advised to consult Example 2.12 in \cite{MS}.
\end{enumerate}
\end{remark}

Consider the \emph{automorphism functor} $A_{V/k}$, associated to a $k$-scheme $V$, from the category of $k$-schemes to the category of groups. For a $k$-scheme $W$, we have
$$A_{V/k}(W)=\aut_W(V\times_kW)$$
(see \cite[page 11]{Mao}). If this functor is representable we denote the representing $k$-group scheme by $\aut_k(V)$. Assume that
$A_{V/k}$ is representable. Then for the notion of an $f$-derivation in the case of a truncated group law $f$, we have a third equivalent condition, just like in the case of the usual group actions (using Yoneda Lemma), since there is always a natural map between the group functor represented by $G_f$ and the group functor $A_{V/k}$.
\begin{enumerate}
\item[(T3)] The set of all $k$-group scheme morphisms $G_f\to \aut_k(V)$.
\end{enumerate}
\noindent
Similarly in the case of a formal group law $F$:
\begin{enumerate}
\item[(F3)] The set of all compatible systems of $k$-group scheme morphisms
$$\left(G_{F[m]}\to \aut_k(V)\right)_{m>0}.$$
\end{enumerate}
\begin{remark}
We point out two cases (special cases of \cite[Theorem 3.7]{Mao}) when the functor $A_{V/k}$ is representable:
\begin{itemize}
\item $V$ is a projective variety over $k$;

\item $V=\spec(K)$ such that the extension $k\subseteq K$ is finite (including inseparable extensions!), see \cite[Example 3]{Mao}.
\end{itemize}
\end{remark}

\subsection{Truncations of group schemes}\label{gpschemetrun}
Let $G$ be an affine group scheme over $k$, $H$ the corresponding Hopf algebra, $\mathfrak{m}$ the kernel of the counit map $H\to k$ (the \emph{augmentation ideal}) and $m>0$. Using the base-change given by the automorphism $\fr^m:k\to k$,
we get the affine group scheme over $k$
$$G^{(m)}:=G\otimes_{(k,\fr^m)}k,$$
and a group scheme morphism $\fr^m_G:G\to G^{(m)}$. Let $G[m]$ be the kernel of $\fr^m_G$ which is a truncated $k$-group scheme. It corresponds to the quotient Hopf algebra $$H[m]:=H/\fr^m(\mathfrak{m})H.$$
We get a direct system of truncated $k$-group schemes $(G[m])_{m\in \Nn}$. If $G$ is an algebraic group over $k$, then $\coli(G[m])$ (the direct limit is taken in the category of formal groups) coincides with $\widehat{G}$, the formal group which is the formalization of $G$ (it follows from \cite[Lemma 1.1]{Manin}).

We abuse the language a little bit here identifying the formal group law $\widehat{G}$ (formalization of the algebraic group as in \cite[p. 13]{Manin}) with the corresponding formal group.

%Let $H$ be a Hopf algebra over $k$, the maximal ideal . Let us define
%$$H[m]:=H/\mathfrak{m}^{p^m}.$$
%Then $H[m]$ has a natural Hopf algebra structure (it is also a complete Hopf algebra) such that the quotient map $H\to H[m]$ is a Hopf algebra homomorphism [{\bf %reference}].
Similarly for a complete Hopf algebra $\mathcal{H}$, we have the analogous quotient $\mathcal{H}[m]$ which is again a Hopf algebra and also a complete Hopf algebra. Hence for a formal group $\mathcal{F}$, we have a direct system of truncated group schemes $\mathcal{F}[m]$ and in this case we get that $\mathcal{F}=\coli \mathcal{F}[m]$ (see \cite[Lemma 1.1]{Manin} again).

For any $m$-truncated group law $f$ and $l\leqslant m$ we have $G_f[l]\cong G_{f[l]}$ and similarly for any formal group law $F$ and the corresponding formal group $\mathcal{F}$, we have $G_{F[m]}\cong \mathcal{F}[m]$.
%\\
%Finally, for an affine group scheme $G$, we have $\widehat{G}[m]\cong G[m]$, since for any Hopf algebra $H$, there is a natural isomorphism
%$$H/\mathfrak{m}^{p^m}\cong \widehat{H}/\widehat{\mathfrak{m}}^{p^m}$$
%[{\bf reference}].
\subsection{Rational points of formal groups}\label{gpscheme2}
Let $\mathcal{F}$ be a formal group over $k$ corresponding to a complete Hopf algebra $\mathcal{H}$.
By $\mathcal{F}(\mathcal{R})$ (the set of ``$\mathcal{R}$-rational points'' of $\mathcal{F}$) we denote the set of all continuous $k$-algebra morphisms from $\mathcal{H}$ to $\mathcal{R}$. From general categorical reasons, $\mathcal{F}(\mathcal{R})$ is a group. If $\mathcal{F}$ comes from a formal group law $F$, then $\mathcal{F}(\mathcal{R})$ naturally corresponds to $I$, where $I$ is the maximal ideal of $\mathcal{R}$, and the operation on $\mathcal{F}(\mathcal{R})$ corresponds to
$$I\times I\ni (a,b)\mapsto F(a,b)\in I$$
(see \cite[Section 1.3]{Hazew}).

We note some properties of these groups.
\begin{lemma}\label{formalpoints}
Let $\alpha:\mathcal{F}\to \mathcal{F}'$ be a morphism of formal groups and $G$ an algebraic group over $k$. We have:
\begin{enumerate}
\item The induced map $\alpha:\mathcal{F}(\mathcal{R})\to \mathcal{F}'(\mathcal{R})$ is a homomorphism of groups.
\item There is a natural monomorphism of groups $\widehat{G}(\mathcal{R})\to G(\mathcal{R})$.
\end{enumerate}
\end{lemma}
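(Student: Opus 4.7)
The plan is to reduce both parts to standard Hopf-algebraic diagram chases. For (1), a morphism of formal groups $\alpha:\mathcal{F}\to \mathcal{F}'$ is dual to a continuous $k$-algebra morphism $\alpha^{\ast}:\mathcal{H}'\to \mathcal{H}$ between the complete Hopf algebras that respects the comultiplications and counits. The induced map on $\mathcal{R}$-rational points is precomposition, $\alpha_{\ast}(\varphi)=\varphi\circ \alpha^{\ast}$. Writing the product on $\mathcal{F}(\mathcal{R})=\homcont_k(\mathcal{H},\mathcal{R})$ as
$$\varphi_1\cdot \varphi_2 \;=\; m_{\mathcal{R}}\circ (\varphi_1\widehat{\otimes}\varphi_2)\circ \Delta_{\mathcal{H}},$$
the homomorphism property $\alpha_{\ast}(\varphi_1\cdot\varphi_2)=\alpha_{\ast}(\varphi_1)\cdot\alpha_{\ast}(\varphi_2)$ follows in one line from the intertwining identity $\Delta_{\mathcal{H}}\circ\alpha^{\ast}=(\alpha^{\ast}\widehat{\otimes}\alpha^{\ast})\circ\Delta_{\mathcal{H}'}$ and the bifunctoriality of $\widehat{\otimes}$.

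For (2), let $H$ be the Hopf algebra of $G$, let $\mathfrak{m}\subseteq H$ be the augmentation ideal, and write $\widehat{H}:=\li H/\mathfrak{m}^n$. By the discussion in Section \ref{gpschemetrun}, $\widehat{G}$ corresponds to the complete Hopf algebra $\widehat{H}$, and the canonical embedding $\iota:H\to \widehat{H}$ is a morphism of Hopf algebras. Precomposition with $\iota$ therefore defines a set map
$$\iota^{\ast}:\widehat{G}(\mathcal{R})\longrightarrow G(\mathcal{R}),$$
and the same diagram chase as in (1), applied to $\iota$ in place of $\alpha^{\ast}$, shows it is a group homomorphism.

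The only part that is not purely formal is injectivity of $\iota^{\ast}$. Continuity of $\psi:\widehat{H}\to \mathcal{R}$ is interpreted with respect to the $\mathfrak{m}$-adic topology on $\widehat{H}$ and the $I$-adic topology on $\mathcal{R}$, so every such $\psi$ is determined by its factorizations through the quotients $\widehat{H}\to \widehat{H}/\mathfrak{m}^n$. Since $\iota(H)$ is dense in $\widehat{H}$ and $\mathcal{R}$ is Hausdorff, any continuous $\psi$ is determined by $\psi\circ \iota$, whence $\iota^{\ast}$ is injective. The only mildly delicate step is unpacking this notion of continuity and checking that $\widehat{G}(\mathcal{R})$ really does correspond, under $\iota^{\ast}$, to those $\varphi\in G(\mathcal{R})$ sending $\mathfrak{m}$ into $I$; once this identification is in place, both parts reduce to the Hopf-algebraic functoriality described above.
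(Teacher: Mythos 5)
Your proposal is correct and follows essentially the same route as the paper: part (2) is proved there exactly as you do it, via the $k$-algebra map $H\to \widehat{H}$ (completion at the augmentation ideal), precomposition, injectivity because a continuous homomorphism $\widehat{H}\to\mathcal{R}$ is determined by its values on $H$, and a diagram chase for the homomorphism property. The paper dismisses part (1) as clear, so your explicit convolution-product computation with the identity $\Delta_{\mathcal{H}}\circ\alpha^{\ast}=(\alpha^{\ast}\widehat{\otimes}\alpha^{\ast})\circ\Delta_{\mathcal{H}'}$ simply fills in that routine step.
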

\begin{proof}
Since the first part is clear, we comment only on $(2)$. For simplicity, we assume that $G=\spec(H)$ is affine. Then $\widehat{G}$ corresponds to $\widehat{H}$, where the completion is taken with respect to the augmentation ideal of the Hopf algebra $H$. Since the map $H\to \widehat{H}$ is a $k$-algebra map and any continuous homomorphism $\widehat{H}\to \mathcal{R}$ is determined by its values on $H$, we get a one-to-one map $\widehat{G}(\mathcal{R})\to G(\mathcal{R})$. An easy diagram chase shows that this map is a homomorphism of groups (see also Section 2.6 in \cite{K8}).
\end{proof}

\subsection{Truncated HS-derivations and actions of restricted Lie algebras}
In this subsection, we present the terminology from \cite{Tyc} and explain how does it fit into our context (see also (2.1b) in \cite{Chase2}). We assume that the field $k$ is perfect. Let $F$ be a formal group law. A function $f:k\llbracket X\rrbracket\to k\llbracket X\rrbracket$ is \emph{$F$-invariant} if for any $n\in\mathbb{N}$
$$f\circ D_n=D_n\circ f,$$
where $(D_n)_n$ is the canonical $F$-derivation on $k\llbracket X\rrbracket$ (see \cite[Section 3.2]{HK1}). The subset of $\Der_k(k\llbracket X\rrbracket)$ consisting of $F$-invariant derivations is a restricted Lie subalgebra, which is denoted by $L(F)$. It is a one-dimensional vector space over $k$ spanned by $D_1$ \cite[Lemma 2.1]{Tyc}. The restricted Lie algebra $L(F)$ \emph{acts} on a $k$-algebra $R$, if there is a homomorphism
$$\varphi:L(F)\to \Der_k(R)$$
of restricted Lie algebras.

It was noted in \cite{Tyc} that such an action is nothing else than a choice of $d\in\Der_k(R)$ such that
$d^{(p)}=c_F\cdot d$, where $c_F\in k$ is such that $D_1^{(p)}=c_F\cdot D_1$ (in this case the homomorphism $\varphi$
sends $D_1$ to $d$).

We need the following lemma which is a folklore.
\begin{lemma}\label{LFiso}
For any formal group law $F$ we have:
\begin{enumerate}
 \item the restricted Lie algebra $L(F)$ is isomorphic either to $L(X+Y)$ or to $L(X+Y+XY)$,
 \item the $1$-truncated group law $F[1]$ is isomorphic either to $\ga[1]$ or to $\gm[1]$.
\end{enumerate}
\end{lemma}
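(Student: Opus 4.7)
The plan is to prove part~(2) first and then deduce part~(1) from~(2).

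For~(2), I would write the $1$-truncated group law as $F[1](X,Y) = X + Y + a_{11} XY + \sum_{i+j \geq 3,\, 1 \leq i,j < p} a_{ij} X^i Y^j$ in $k[X,Y]/(X^p, Y^p)$, with $a_{ij} = a_{ji}$ since one-dimensional truncated group laws are automatically commutative over a field. I would then split according to whether $a_{11}$ vanishes. If $a_{11} = 0$, I would argue by induction on $n \in \{2, \ldots, p-1\}$ that an appropriate change of parameter $X \mapsto X + b_n X^n$ kills the degree-$n$ discrepancy between $F[1]$ and $X + Y$; after $p - 2$ inductive steps one reaches $F[1] \cong \ga[1]$. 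If $a_{11} \neq 0$, the linear rescaling $X \mapsto a_{11}^{-1} X$ first normalizes $a_{11} = 1$ (possible over any field, since $a_{11} \in k^*$), and then the analogous higher-order induction yields $F[1] \cong \gm[1]$. Each inductive step reduces to solving a symmetric coboundary equation in one variable of degree $n < p$, which is solvable by the vanishing of the corresponding symmetric Hochschild-type $2$-cohomology of $\ga$ (respectively $\gm$) in this bounded degree range.

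For~(1), observe that $L(F) = k D_1$ with $D_1^{(p)} = c_F D_1$, and both $D_1$ and its $p$-th iterate are determined by $F[1]$: indeed $D_1 = g(X)\partial_X$ where $g(X) = F_Y(X, 0)$, and only the coefficients $g \bmod X^p$ matter for $D_1$ viewed as an operator on $k[X]/X^p$, since $\partial_X(X^p) = 0$ in characteristic $p$. Hence $L(F)$ is isomorphic, as a restricted Lie algebra, to the analogous one-dimensional restricted Lie algebra built from $F[1]$. Combined with~(2), this yields $L(F) \cong L(X + Y)$ or $L(F) \cong L(X + Y + XY)$.

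The main obstacle is the inductive cancellation of higher-order terms in~(2). A cleaner alternative is to invoke the Demazure-Gabriel equivalence between height-one commutative infinitesimal group schemes over the perfect field $k$ and finite-dimensional commutative restricted $p$-Lie algebras, which reduces the classification to the fact that a one-dimensional restricted Lie algebra arising from a formal group law falls into exactly two iso types; here it is important that the linear rescaling $X \mapsto a_{11}^{-1}X$ simultaneously normalizes both the quadratic coefficient of $F[1]$ and the $p$-power scalar of $L(F)$ to $1$, so that no $(p-1)$-th root extraction is required and the argument goes through over an arbitrary perfect $k$.
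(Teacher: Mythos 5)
Your overall architecture (prove (2), deduce (1) from it via the fact that $L(F)$ only depends on $F[1]$) is legitimate, and it is roughly inverse to the paper's, which proves (1) by a direct computation and gets (2) from the Dieudonn\'e--module classification of group schemes of order $p$ (\cite[Theorem p.~69]{Demazure}: Frobenius acts as $0$, Verschiebung as $0$ or $1$). However, your proof of (2) — the core of the proposal — has a genuine error: the coefficient $a_{11}$ of $XY$ is \emph{not} an isomorphism invariant of a $1$-truncated group law when $p>2$, so the dichotomy ``$a_{11}=0\Rightarrow\ga[1]$, $a_{11}\neq 0\Rightarrow\gm[1]$'' is false in both directions. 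Concretely, take $p=3$, $\phi(X)=X+X^{2}$, and let $f$ be the image of $\phi^{-1}\bigl(\phi(X)+\phi(Y)\bigr)$ in $k[X,Y]/(X^{3},Y^{3})$. Since $\phi$ induces an automorphism of $k[X]/(X^{3})$, we have $f\cong\ga[1]$; but a direct computation gives $f=X+Y+XY+X^{2}Y+XY^{2}$, so $a_{11}=1\neq 0$. Your second branch would then attempt to normalize $f$ to $X+Y+XY$ by parameter changes $X\mapsto X+b_{n}X^{n}$, which is impossible because $\ga[1]\ncong\gm[1]$. The induction gets stuck exactly at total degree $n=p$: there the discrepancy is a multiple of the cocycle $\sum_{i=1}^{p-1}\frac{1}{p}{p\choose i}X^{i}Y^{p-i}$ (in the example above it is $X^{2}Y+XY^{2}$), while the coboundaries $b\bigl((X+Y)^{p}-X^{p}-Y^{p}\bigr)$ vanish identically in characteristic $p$. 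So the ``vanishing of symmetric $2$-cohomology in this bounded degree range'' that you invoke is precisely what fails: degree $p$ lies in the range $2\leq n\leq 2p-2$, and it is the one degree where the relevant cohomology is nonzero. The correct dichotomy is governed by this degree-$p$ obstruction class — equivalently by whether the Verschiebung of $G_{F[1]}$ vanishes, or whether $c_{F}=0$ where $D_{1}^{(p)}=c_{F}D_{1}$ — not by $a_{11}$. (Only for $p=2$ do the two coincide, which may be why the error is easy to miss.)

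The ``cleaner alternative'' in your last paragraph inherits the same mistake and adds another. Rescaling $X\mapsto\lambda X$ multiplies $a_{11}$ by $\lambda$ but multiplies $c_{F}$ by $\lambda^{p-1}$, and the two quantities are in fact unrelated: in the example above $a_{11}=1$ while $c_{F}=0$. So the rescaling $X\mapsto a_{11}^{-1}X$ does not ``simultaneously normalize'' the $p$-power scalar, and the $(p-1)$-th root extraction needed to identify a nonzero $c_{F}$ with $1$ (equivalently, to normalize a nonzero Verschiebung to $1$) does not disappear; it is harmless after passing to $\overline{k}$, which is effectively what the paper does (its main theorem reduces to algebraically closed $k$), but your claim that the argument works over an arbitrary perfect field by this mechanism is unwarranted. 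If you want a correct hands-on proof of (2), run your induction against $\ga[1]$ only, observe that the sole possible obstruction is the degree-$p$ class $c\cdot\sum_{i}\frac{1}{p}{p\choose i}X^{i}Y^{p-i}$, and show that $c\neq 0$ forces $f\cong\gm[1]$ after extracting a $(p-1)$-th root of $c$; alternatively, simply cite the Dieudonn\'e-theoretic classification as the paper does.
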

\begin{proof}
Item $(1)$ is an easy computation and item $(2)$ follows as in the beginning of the proof of Corollary \ref{lastcor}.
\end{proof}
The next result says that restricted Lie algebra actions correspond exactly to $1$-truncated group law actions.
\begin{prop}\label{LFactions}
 Let $d\in\Der_k(R)$. Then for any formal group law $F$ the following are equivalent:
 \begin{enumerate}
  \item the derivation $d$ defines an action of $L(F)$ on $R$,
  \item we have $d^{(p)}=c_F\cdot d$,
  \item there exists an $F[1]$-derivation $(d_n:R\to R)_{n<p}$ such that $d_1=d$.
 \end{enumerate}
\end{prop}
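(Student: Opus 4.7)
The equivalence $(1) \Leftrightarrow (2)$ is essentially recorded in the discussion just before the proposition: since $L(F)$ is one-dimensional over $k$, spanned by $D_1$, with restricted structure determined by $D_1^{(p)} = c_F \cdot D_1$, any $k$-linear map $\varphi \colon L(F) \to \Der_k(R)$ is determined by $d := \varphi(D_1)$, and it is a restricted Lie algebra homomorphism precisely when $d^{(p)} = \varphi(D_1^{(p)}) = c_F \cdot d$. I would record this verification in a line or two and then pass to the equivalence with $(3)$, which is the real content.

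For $(2) \Leftrightarrow (3)$, the plan is to reduce, via Lemma \ref{LFiso}(2), to the two canonical cases $F[1] \cong \ga[1]$ (corresponding to $c_F = 0$) and $F[1] \cong \gm[1]$ (corresponding to $c_F \neq 0$). Such an isomorphism of $1$-truncated group laws is a change of parameter on $k[v]/(v^p)$ and induces a bijection between $F[1]$-derivations on $R$ and $\ga[1]$- (respectively $\gm[1]$-) derivations on $R$, under which the first component $d_1 = d$ is rescaled by the leading coefficient of the change-of-variable; this converts the condition $d^{(p)} = c_F \cdot d$ into the corresponding normalized condition $(d')^{(p)} = 0$ or $(d')^{(p)} = d'$ for the rescaled derivation $d'$.

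In each reduced case the verification is explicit. For $\ga[1]$, iterativity together with $d_0 = \id$ and $d_1 = d$ forces $d_n = d^{(n)}/n!$ for $n < p$ (since $n!$ is a unit in $k$); writing out the $1$-truncated iterativity diagram in $R[v,w] = R[X,Y]/(X^p, Y^p)$ and expanding $(v+w)^n$ binomially, one checks that the coefficients of $v^m w^n$ with $m+n < p$ match automatically, while matching the coefficients with $m+n \geq p$ is equivalent (take $m = 1$, $n = p-1$) to $d^{(p)} = 0$. The higher Leibniz rule is then a routine check. For $\gm[1]$, the corresponding formula is $d_n = d(d-1)(d-2)\cdots(d-n+1)/n!$, and the parallel computation reduces the $1$-truncated iterativity precisely to $d^{(p)} = d$, in the spirit of \cite{Tyc}. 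The main obstacle I anticipate is the bookkeeping in the reduction step: one must verify that the isomorphism of Lemma \ref{LFiso}(2) can be chosen so that the induced identification of first components is exactly the rescaling above and that the transformed scalar matches $c_F$; after that, the remainder is combinatorial, in the style of the height-one restrictions of the classical arguments of Matsumura and Tyc.
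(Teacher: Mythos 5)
Your proposal is correct and follows essentially the same route as the paper: reduce via Lemma \ref{LFiso} to $F=X+Y$ or $F=X+Y+XY$, obtain $(1)\Leftrightarrow(2)$ from the discussion preceding the proposition, note that $(3)\Rightarrow(2)$ is immediate, and prove $(2)\Rightarrow(3)$ by exhibiting the truncated derivation explicitly. Your closed formulas $d_n=d^{(n)}/n!$ and $d_n=d(d-1)\cdots(d-n+1)/n!$ are precisely what the paper's reference to \cite[Remark 2.9]{HK1} (additive case) and its inductive definition $d_{n+1}=\frac{1}{n+1}(d\circ d_n-nd_n)$ (multiplicative case) yield.
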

\begin{proof}
By Lemma \ref{LFiso}, we may assume that $F=X+Y$ or $F=X+Y+XY$. Equivalence of $(1)$ and $(2)$ was already explained above. Implication from $(3)$ to $(2)$ is clear.

For the remaining implication, the case of $F=X+Y$ is dealt with in \cite[Remark 2.9]{HK1}.
%Then setting $d_n:=\frac{1}{n!}d^{(n)}$ defines $F[1]$-derivation.
For the case $F=X+Y+XY$, we notice that if $\partial=(\partial_n)_{n<p}$ is an $F[1]$-derivation, then it satisfies (for $n\leqslant p-2$)
$$\partial_1\circ \partial_n=(n+1)\partial_{n+1}+n\partial_n.$$
Thus we can inductively define
$$d_{n+1}:=\frac{1}{n+1}(d\circ d_n-n d_n).$$
It is straightforward to show that $(d_{n})_{n<p}$ is an $F[1]$-derivation.
\end{proof}

\section{Iterative derivations on projective line}\label{secproj}

Assume that $k$ is a perfect field of characteristic $p>0$ and $m$ is a positive integer. In this section we discuss the problem whether truncated HS-derivations on projective line $\Pp^1=\Pp^1_k$ are integrable. A (truncated) HS-derivation on $\Pp^1$ is by definition a (truncated) HS-derivation on the structure sheaf of $\Pp^1$ (see \cite{KP5}).

It is very easy to describe (truncated) HS-derivations on $\Pp^1$: they correspond to (truncated) HS-derivations $\partial$ over $k$ on the polynomial algebra $k[t]$ such that $\partial$ preserves (after taking the unique extension to $k(t)$) the subalgebra $k[1/t]$. For a usual derivation $D$ the above condition means that $\deg(D(t))\leqslant 2$ which geometrically corresponds to the fact that the tangent bundle of projective line coincides with the line bundle $\mathcal{O}(2)$ (see Remark \ref{arc} for a more general geometric picture). Such an (truncated) HS-derivation on $\Pp^1$ satisfies a given iterativity condition if and only if the corresponding (truncated) HS-derivation on $k[t]$ does.

We note a projective version of our results about integrable derivations from \cite{HK1}.
\begin{theorem}\label{projintegr}
Let $\partial$ be a $\ga[m]$-derivation (resp. $\gm[m]$-derivation) on $\Pp^1$. Then $\partial$ can be expanded to a $\ga$-derivation (resp. $\gm$-derivation) on $\Pp^1$.
\end{theorem}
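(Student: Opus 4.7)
The plan is to pass to the automorphism group $\aut_k(\Pp^1) = \pgl_2$, thereby reducing the integrability question to lifting a group scheme morphism. I will handle the $\ga$-case first; the $\gm$-case is analogous and I will sketch it at the end.

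Since $\Pp^1$ is projective, the automorphism functor is representable by $\pgl_2$ (Remark of Section~\ref{gpscheme1}), so by the correspondences (T3) and (F3), a $\ga[m]$-derivation on $\Pp^1$ is the same as a $k$-group scheme morphism $\rho\colon \ga[m]\to \pgl_2$, and extending it to a $\ga$-derivation amounts to extending $\rho$ to a formal group morphism $\tilde\rho\colon \gaf\to \pgl_2$. The key reduction I would carry out next is to show that, after a conjugation in $\pgl_2(k)$ (which does not affect extendibility, as it merely replaces the action by an isomorphic one), $\rho$ factors through the standard upper-triangular unipotent subgroup $U\cong \ga \hookrightarrow \pgl_2$. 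Indeed $\ga[m]$, being the $m$-th Frobenius kernel of $\ga$, is a connected infinitesimal unipotent group scheme, so by Borel's fixed-point theorem (in its form valid for such groups acting on a projective variety) it has a fixed point on $\Pp^1$; then $\rho$ factors through the stabilizer of that point, a Borel subgroup $B = T\cdot U \subseteq \pgl_2$, and the further composition $\ga[m]\to B \twoheadrightarrow B/U \cong \gm$ is a morphism from a connected unipotent to a multiplicative group, hence trivial, forcing $\rho$ to factor through $U\cong \ga$.

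Once this reduction is in place, the classification of $k$-group scheme morphisms $\ga[m]\to \ga$ identifies them with the primitive elements of the Hopf algebra $k[X]/(X^{p^m})$, which are precisely the polynomials $\sigma(X) = \sum_{i=0}^{m-1} \lambda_i X^{p^i}$ for $\lambda_i\in k$ (by a short check using $\binom{p^i}{j}\equiv 0\pmod p$ for $0<j<p^i$). Viewed in $k\llbracket X\rrbracket$, the same polynomial is still primitive for the Hopf algebra of $\gaf$, so it defines an extension $\tilde\sigma\colon \gaf\to \ga$; composing with $\ga\hookrightarrow \pgl_2$ gives the required $\tilde\rho$.

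For the $\gm[m]$-case, $\gm[m] = \mu_{p^m}$ is diagonalizable, so by rigidity of diagonalizable group schemes its image in $\pgl_2$ lies in a maximal torus, which after conjugation I may take to be the standard diagonal $T\cong \gm$. Characters $\mu_{p^m}\to \gm$ are classified by $\Zz/p^m\Zz$ and lift to characters $\gmf\to \gm$ via any lift to $\Zz_p$ (concretely, $(1+X)^{\tilde a}\in k\llbracket X\rrbracket$ is a group-like element extending $(1+X)^{a}\in k[X]/(X^{p^m})$). The main obstacle in the plan is verifying the factorization claim in the unipotent case, which rests on Borel's fixed-point theorem for infinitesimal unipotent group schemes (with its multiplicative-type analogue, rigidity, handling the $\gm$-case); everything after that reduction is a straightforward classification together with an essentially tautological extension of a polynomial to a power series.
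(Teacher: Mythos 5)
Your overall strategy is legitimate and genuinely different from the paper's proof: the paper never passes to $\aut_k(\Pp^1)$, but instead takes the canonical element $a\in k(t)$ provided by \cite{HK1} and argues that the canonical $\ga$- and $\gm$-derivations preserve $k[1/t]$; you instead reduce, via (T3)/(F3), to extending a morphism $G[m]\to\pgl_2$ through one of the subgroups $\ga$ or $\gm$ of $\pgl_2$, after which the extension problem becomes trivial (additive polynomials, resp.\ lifting a character along $\Zz\twoheadrightarrow\Zz/p^m\Zz$). The problem is that both of your factorization steps invoke statements that are not theorems in the generality you need. For the additive case: there is no Borel fixed-point theorem for infinitesimal unipotent group schemes acting on projective varieties. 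The translation action of $\ga[1]\cong E[\fr]$ on a supersingular elliptic curve $E$ has empty fixed-point scheme; this is exactly the phenomenon of Remark \ref{hdim}(2), and it is why Theorem \ref{htrtl} must assume that $A_{W/k}$ is represented by a \emph{linear} group. For $\Pp^1$ the fixed point you want does exist, but it has to be produced by hand: reduce to the case $\rho|_{\ga[1]}$ nontrivial by dividing by Frobenius kernels, observe that over a perfect field a nonzero $p$-nilpotent element of the Lie algebra of $\pgl_2$ is the image of a nilpotent $2\times 2$ matrix (for $p=2$ this uses perfectness), hence is $\pgl_2(k)$-conjugate to $\frac{\ddd}{\ddd t}$, and then show that \emph{all} of $\ga[m]$, not just its first Frobenius kernel, fixes the resulting rational point; this last step is genuinely delicate for $p=2$, where the scheme-theoretic centralizer of this copy of $\ga[1]$ in $\pgl_2$ is non-smooth and strictly larger than $U$.

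The multiplicative case has a gap that cannot be repaired in the stated generality (the paper assumes only that $k$ is perfect). The assertion ``its image lies in a maximal torus, which after conjugation I may take to be the standard diagonal'' fails over non-algebraically-closed fields, because $\pgl_2$ has non-split maximal tori, and for $p=2$ the split $\gm[1]=\mu_2$ does embed into them. Concretely, over $k=\Ff_2$ let $\partial_1=(1+t+t^2)\frac{\ddd}{\ddd t}$. Then $\partial_1^{(2)}=\partial_1$ and $\partial_1(1/t)=1+t^{-1}+t^{-2}$, so $(\id,\partial_1)$ is a $\gm[1]$-derivation on $\Pp^1_{\Ff_2}$; the associated $\mu_2\subset\pgl_2$ equals $T'[2]$ for a non-split maximal torus $T'$, and it lies in no split torus, since its fixed scheme is the non-rational conjugate pair $\{1+t+t^2=0\}$ while a split torus fixes two rational points. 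Any extension to a morphism $\mu_4\to\pgl_2$ has commutative connected image containing this $\mu_2$, hence lies in the unique maximal torus containing it, namely $T'$; over $\overline{\Ff}_2$ this leaves exactly the two characters $x\mapsto x^a$, $a\in\{1,3\}$, of the diagonalized $T'$, and the Galois involution (which inverts $T'$) swaps them, so neither is defined over $\Ff_2$. Thus this $\gm[1]$-derivation does not extend even to a $\gm[2]$-derivation on $\Pp^1_{\Ff_2}$: your method cannot be completed over perfect $k$, and in fact, pushed this far, it uncovers a counterexample to the theorem as stated. (The paper's own argument breaks at the same point: it tacitly treats the canonical element as a projective coordinate, i.e.\ $k(a)=k(t)$, whereas here $a=1+t+t^2$ generates the proper subfield $\Ff_2(t+t^2)$.) Both your argument and the statement are correct if $k$ is assumed algebraically closed; for $p>2$ perfectness does suffice in the multiplicative case, since then $T'[p^j]$ is a nontrivial form of $\mu_{p^j}$ for any non-split torus $T'$, so the image of $\mu_{p^m}$ can only lie in split tori.
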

\begin{proof}
Let us fix $\partial$, a $\ga[m]$-derivation or a $\gm[m]$-derivation on $\Pp^1$. We also denote by $\partial$ the corresponding $m$-truncated HS-derivation on $k(t)$. By \cite[Prop. 4.5]{HK1} and \cite[Prop. 4.10]{HK1}, there is a canonical element $a\in k(t)$ for $\partial$. To finish the proof as in \cite[Thm. 4.7]{HK1}, it is enough to notice that the canonical $\ga$-derivation and the canonical $\gm$-derivation preserve $k[1/t]$. It is well-known in the additive case and in any case it follows by induction from the following formula ($n>0$)
$$0=\partial_n(tt^{-1})=t\partial_n(t^{-1})+\partial_1(t)\partial_{n-1}(t^{-1}),$$
since $\partial_1(t)=1$ or $\partial_1(t)=t+1$ and $\partial_i(t)=0$ for $i>1$.
\end{proof}
We will show that for most of the other iterativity rules, truncated HS-derivations on $\Pp^1$ cannot be integrated. It will follow from the result below saying that the existence of a non-trivial $F$-derivation on an algebraic variety from a certain class including projective spaces (see Remark \ref{hdim}(1)) is quite a restrictive condition on the formal group law $F$. For the notion of the height of a formal group law we refer the reader to \cite[Def. 18.3.3]{Hazew}.
\begin{theorem}\label{htrtl}
Let $F$ be a formal group law over $k$ and $W$ be a $k$-scheme such that the functor $A_{W/k}$ is representable by a linear algebraic group over $k$. If there is a non-trivial $F$-derivation on $W$, then $\htt(F)=1$ or $\htt(F)=\infty$.
\end{theorem}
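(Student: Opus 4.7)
The plan is to use the correspondence (F3) from Section~\ref{gpscheme1} together with a faithful embedding of $G:=\aut_k(W)$ into some $\gl_n$, and to extract a contradiction from the ``$[p]$-commutation'' identity that any formal-group morphism from $F$ to $\widehat{\gl_n}$ must satisfy. First, a non-trivial $F$-derivation on $W$ corresponds, via (F3), to a non-trivial compatible family of $k$-group scheme morphisms $(G_{F[m]}\to G)_{m>0}$; since each $G_{F[m]}$ is infinitesimal, this family factors through the formal completion $\widehat{G}$ of $G$ at the identity, and passing to the direct limit gives a non-trivial morphism of formal groups $\alpha\colon\coli G_{F[m]}\to\widehat{G}$. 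Choose a closed embedding $G\hookrightarrow\gl_n$, available because $G$ is linear algebraic; composing with the induced embedding $\widehat{G}\hookrightarrow\widehat{\gl_n}$, one represents $\alpha$ concretely as a matrix of power series $A(X)\in M_n(k\llbracket X\rrbracket)$ with $A(0)=0$ and $A(F(X,Y))=A(X)+A(Y)+A(X)A(Y)$ (the formal group law on $\widehat{\gl_n}$, since $(I+A)(I+B)=I+A+B+AB$).

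The crucial observation is that $[p]$ on $\widehat{\gl_n}$ is simply $A\mapsto A^p$: in characteristic $p$, $(I+A)^p=I+A^p$ because $I$ and $A$ commute and the intermediate binomial coefficients are divisible by $p$. So $\alpha$ being a group homomorphism yields
\[
A([p]_F(X))=A(X)^p. \qquad(\ast)
\]
Assume for contradiction that $h:=\htt(F)$ satisfies $1<h<\infty$, so $[p]_F(X)=uX^{p^h}+(\text{higher terms})$ with $u\in k^{\times}$. I would then write the characteristic polynomial as $\chi_{A(X)}(T)=T^n-c_1(X)T^{n-1}+\cdots+(-1)^n c_n(X)$ with $c_i(X)\in k\llbracket X\rrbracket$ the $i$-th elementary symmetric function of the eigenvalues; note that $c_i(0)=0$ because $A(0)=0$. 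In characteristic $p$ one has $e_i(\lambda_1^p,\ldots,\lambda_n^p)=e_i(\lambda_1,\ldots,\lambda_n)^p$ by Frobenius, so comparing the characteristic polynomials of the two sides of $(\ast)$ and matching coefficients yields $c_i([p]_F(X))=c_i(X)^p$ for every $i$. If some $c_i$ were non-zero, writing $c_i(X)=X^r\nu(X)$ with $r\geqslant 1$ and $\nu(0)\neq 0$ would give the two sides $X$-orders $rp^h$ and $rp$ respectively, forcing $p^h=p$ and contradicting $h>1$. Hence each $c_i$ vanishes, $\chi_{A(X)}(T)=T^n$, and Cayley--Hamilton yields $A(X)^n=0$.

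To finish, I pick $j$ with $p^j\geqslant n$. Iterating $(\ast)$ gives $A([p^j]_F(X))=A(X)^{p^j}=0$; since $h<\infty$, the power series $[p^j]_F(X)$ has positive order ($p^{jh}$ in fact) and is non-zero, so the substitution endomorphism $f\mapsto f\circ[p^j]_F$ of $k\llbracket X\rrbracket$ is injective. Applied entry-wise this forces $A=0$, contradicting non-triviality of $\alpha$. The main subtlety to watch is that using only the determinant of $A(X)$ is insufficient to force $A(X)$ to be nilpotent as a matrix; one genuinely needs all elementary symmetric functions of the eigenvalues, and the characteristic-$p$ identity $e_i(x^p)=e_i(x)^p$ is what converts the non-commutative matrix equation $(\ast)$ into the scalar equations driving the argument.
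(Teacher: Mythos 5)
Your proof is correct, and after the shared opening step it takes a genuinely different route from the paper's. Both arguments start identically: via (F3), the non-trivial $F$-derivation gives a compatible family $G_{F[m]}\to G:=\aut_k(W)$, which passes (you: because infinitesimal group schemes map into the formal completion; the paper: by factoring through the Frobenius kernels $G[m]=\ker(\fr^m_G)$) to a non-trivial morphism of formal groups $\mathcal{F}\to\widehat{G}$. From there the paper argues structurally: using points over a complete DVR and a Zariski-density argument, it factors this morphism through $\widehat{V}$ for a commutative algebraic subgroup $V$ of $G$, then uses a composition series of $V$ with factors $\ga$ or $\gm$ to obtain a non-trivial morphism $\mathcal{F}\to\gaf$ or $\mathcal{F}\to\gmf$, and finishes with Manin's classification of commutative formal groups up to isogeny. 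You instead pick a closed embedding $G\hookrightarrow\gl_n$ and compute inside $M_n(k\llbracket X\rrbracket)$: the $[p]$-map on $\widehat{\gl_n}$ is $A\mapsto A^p$; the universal characteristic-$p$ identity $c_i(A^p)=c_i(A)^p$ converts the matrix equation $A([p]_F(X))=A(X)^p$ into the scalar equations $c_i([p]_F(X))=c_i(X)^p$; the order count $rp^{\htt(F)}=rp$ kills every $c_i$ when $1<\htt(F)<\infty$; Cayley--Hamilton makes $A(X)$ nilpotent; and injectivity of substitution by the non-zero series $[p^j]_F$ forces $A=0$, a contradiction. What your route buys is elementarity and self-containedness: no Manin classification, no Borel/Lie--Kolchin structure theory, no DVR rational-points argument, and no base change to the algebraic closure (your symmetric-function identity holds universally over $\Ff_p$, so the reduction to algebraically closed $k$ that opens the paper's proof is unnecessary). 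What the paper's route buys is structural information---the factorization of $\Psi$ through the formalization of a commutative algebraic subgroup---which the paper reuses later (Example \ref{ex0} reasons ``as at the end of the proof of Theorem \ref{htrtl}''). Two points you leave implicit are harmless but worth stating explicitly: $A\neq 0$ because the dual of $\widehat{G}\hookrightarrow\widehat{\gl_n}$ is surjective on complete local rings, so non-triviality survives the composition; and the individual factorizations through $\widehat{G}$ are unique, hence automatically compatible when passing to the colimit.
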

\begin{proof}
Since the height of a formal group law does not change after a base-change, we can assume that $k$ is algebraically closed. Let $G$ be a linear algebraic group over $k$ which represents $A_{W/k}$ and $\mathcal{F}$ be the formal group corresponding to $F$. We denote by $H$ the Hopf algebra of $G$ and by $\mathcal{H}$ the complete Hopf algebra corresponding to $\mathcal{F}$ (so $\mathcal{H}$ is isomorphic to the power series ring in one variable $t$ and the complete Hopf algebra structure is given by $t\mapsto F(t\otimes 1+1\otimes t)$). By the bijective correspondences from Section \ref{gpscheme1}, we get a compatible system of $k$-group scheme morphisms corresponding to the non-trivial $F$-derivation on $W$
$$\left(\mathcal{F}[m]\to \aut_k(W)=G\right)_{m>0}.$$
For each $m$, the above morphism factors through a morphism
$$\mathcal{F}[m]\to G[m],$$
since the morphism $\mathcal{F}[m]\to G$ composed with $\fr^m_{G}$ is the trivial morphism  and
$$G[m]=\ker(\fr^m_{G})$$
(see Section \ref{gpschemetrun}).
%the corresponding Hopf algebra morphism $H\to \mathcal{H}[m]$ takes the maximal ideal of $H$ into the maximal ideal $\mathfrak{m}$ of $\mathcal{H}[m]$ which %satisfies $\mathfrak{m}^{p^m}=\{0\}$ (see Section \ref{gpschemetrun}).
Since we have
$$\coli(\mathcal{F}[m])\cong \mathcal{F},\ \ \ \ \coli(G[m])\cong \widehat{G}$$
(see Section \ref{gpschemetrun}), we get a non-trivial morphism of formal groups
$$\Psi:\mathcal{F}\to \widehat{G}.$$
We proceed to show that the ``Zariski closure of the image of $\Psi$'' is a commutative algebraic subgroup $V$ of $G$ and that $\Psi$ factors through a formal group morphism $\mathcal{F}\to \widehat{V}$.

Let  $\Phi$ be dual to $\Psi$, $\mathcal{I}$ the kernel of $\Phi$, $I=\mathcal{I}\cap H$ and $R$ the quotient of $H$ by $I$. We have a commutative diagram:
\begin{equation*}
  \xymatrix{I \ar[rr]^{} \ar[d]_{} & &\mathcal{I} \ar[d]_{}\\
  H  \ar[rr]^{} \ar[d]_{}  & &\widehat{H} \ar[d]^{\Phi} \\
  R   \ar[rr]^{} \ar[d]_{} &  &\mathcal{H}\\
  \widehat{R} \ar[rru]^{\alpha},}
\end{equation*}
where the completion of $R$ is taken with respect to the image of the maximal ideal in the local ring of $I\in G$. Since $\Phi$ is non-trivial (i.e. $\Phi(\widehat{H})\neq k$), the induced map $\alpha:\widehat{R}\to \mathcal{H}$ is non-trivial as well.

Let $V$ be the closed subvariety of $G$ corresponding to $\spec(R)$. The above diagram means that $\Phi$ factors through a morphism $\mathcal{F}\to \widehat{V}$ whose image is ``Zariski dense in $V$'' (since $R\to \mathcal{H}$ is one-to-one). We need to show that $V$ is a commutative algebraic subgroup of $G$.

Let $\mathcal{R}$ be a complete $k$-algebra which is DVR and $K$ the algebraic closure of its field of fractions. Clearly, $\mathcal{H}\widehat{\otimes}_k\mathcal{R}\cong \mathcal{R}\llbracket t\rrbracket$ as complete $\mathcal{R}$-algebras.  After base-change, we get the following commutative diagram
\begin{equation*}
  \xymatrix{H\otimes_k\mathcal{R}  \ar[rr]^{\beta} \ar[d]_{}  & &\widehat{H}\widehat{\otimes}_k\mathcal{R} \ar[d]^{\Phi_{\mathcal{R}}} \\
  R\otimes_k\mathcal{R}  \ar[rr]^{\gamma} \ar[rrd]_{\gamma_a} &  &\mathcal{H}\widehat{\otimes}_k\mathcal{R} \ar[d]^{\evv_a} \\
  & & \mathcal{R},\\
}
\end{equation*}
where $\gamma$ is still an embedding, since $\mathcal{R}$ is flat over $k$. For any $a$ belonging to the maximal ideal of $\mathcal{R}$, we have the evaluation map $\evv_a:\mathcal{R}\llbracket t\rrbracket\to \mathcal{R}$ and we denote the composition of this map with $\gamma$ by $\gamma_a\in V(\mathcal{R})$.

The set of all evaluation maps as above forms a commutative group $\mathcal{F}(\mathcal{R})$ (see Section \ref{gpscheme2}). By Lemma \ref{formalpoints},
$$\Gamma:=\beta^*(\Psi^*(\mathcal{F}(\mathcal{R})))$$
is a commutative subgroup of $G(\mathcal{R})$. Using the natural embedding of $V(\mathcal{R})$ into $G(\mathcal{R})$ we see that $\Gamma \subseteq V(\mathcal{R})$. However $\gamma$ is an embedding and for any $T\in \mathcal{R}\llbracket t\rrbracket$ there is an element $a$ in the maximal ideal of $\mathcal{R}$ such that $T(a)\neq 0$ (either $a=0$ or any element $a$ in the maximal ideal of $\mathcal{R}$ works). Hence for any $r\in R$ there is $\gamma_a\in \Gamma$ such that $\gamma_a(r)\neq 0$. It means that $\Gamma$ is Zariski dense in $V(K)$. It is well-known (see e.g. \cite[Chapter 1 \S 2]{borellinear}) that the Zariski closure of a commutative subgroup of an algebraic group is a commutative subgroup. Hence $V$ is a commutative algebraic subgroup of $G$ and $\Psi:\mathcal{F}\to \widehat{G}$ factors through a formal group morphism $\mathcal{F}\to \widehat{V}$.

Since (by e.g. \cite[Section 10.2]{borellinear}) any commutative linear algebraic group over $k$ has an algebraic composition series whose factors are isomorphic either to $\ga$ or to $\gm$  (it is true even for solvable groups using the Lie-Kolchin theorem \cite[Cor. 10.5]{borellinear}), we get a non-trivial formal group morphism of the form $\mathcal{F}\to \gaf$ or of the form $\mathcal{F}\to \gmf$. The classification of commutative formal groups up to isogeny \cite[Chapter II \S 4]{Manin} gives that $\mathcal{F}\cong \gaf$ or $\mathcal{F}\cong \gmf$, hence (see \cite[Theorem 18.5.1]{Hazew}) $\htt(F)=1$ or $\htt(F)=\infty$.
\end{proof}
\begin{remark}\label{hdim}
\begin{enumerate}
\item  By \cite[page 21]{Mao}, for each positive integer $n$, the automorphism functor $A_{\Pp^n/k}$ is representable by the linear algebraic group $\pgl(n+1,k)$. Therefore, Theorem \ref{htrtl} applies to $\Pp^n$.

\item Theorem \ref{htrtl} cannot be generalized to the case of a $k$-scheme $W$ such that $A_{W/k}$ is representable by an arbitrary (i.e. not necessarily linear) algebraic group. The reason for that is a result of Manin (see \cite[Theorem 4.1]{Manin} and Remark 1 on page 74 of \cite{Manin}) saying (in particular) that for an arbitrary one-dimensional formal group $\mathcal{F}$, there is an Abelian variety $A$ and a non-trivial formal homomorphism $\mathcal{F}\to \widehat{A}$. Such a formal homomorphism $\mathcal{F}\to \widehat{A}$ gives rise to a non-trivial $F$-derivation on $A$, where $F$ is the formal group law corresponding to $\mathcal{F}$.
    \end{enumerate}
\end{remark}
\begin{cor}\label{lastcor}
Let $F$ be a formal group law over $k$. If $\htt(F)\neq 1$ and $\htt(F)\neq \infty$, then there is an $F[1]$-derivation on $\Pp^1$ which does not expand to an $F$-derivation.
\end{cor}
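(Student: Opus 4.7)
The plan is to exhibit a non-trivial $F[1]$-derivation on $\Pp^1$ and then to invoke Theorem \ref{htrtl} to show it cannot be expanded to an $F$-derivation under the hypothesis $\htt(F)\notin\{1,\infty\}$.

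By Lemma \ref{LFiso}(2), the $1$-truncated group law $F[1]$ is isomorphic, as a $1$-truncated group law, either to $\ga[1]$ or to $\gm[1]$. Under the correspondences (T1)--(T2) of Section \ref{gpscheme1}, such an isomorphism transfers any $\ga[1]$- or $\gm[1]$-derivation on $\Pp^1$ to an $F[1]$-derivation on $\Pp^1$. Non-trivial $\ga[1]$- and $\gm[1]$-derivations on $\Pp^1$ are readily available: truncating the canonical $\ga$- and $\gm$-derivations produces them (geometrically, one restricts the translation action of $\ga$ and the scaling action of $\gm$ on $\Pp^1$ to the first infinitesimal neighbourhood of the identity). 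Non-triviality is preserved under the isomorphism, so in either case we obtain a non-trivial $F[1]$-derivation $\partial$ on $\Pp^1$.

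Now suppose, for contradiction, that $\partial$ expands to an $F$-derivation $\mathbb{D}$ on $\Pp^1$. Since the components of $\mathbb{D}$ of index $<p$ form $\partial$, which is non-trivial, $\mathbb{D}$ itself is non-trivial. By Remark \ref{hdim}(1), the automorphism functor $A_{\Pp^1/k}$ is representable by $\pgl(2,k)$, which is a linear algebraic group. Theorem \ref{htrtl} therefore applies with $W=\Pp^1$ and forces $\htt(F)\in\{1,\infty\}$, contradicting the hypothesis. Hence no such expansion can exist.

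The main technical obstacle has already been surmounted in the proof of Theorem \ref{htrtl}; the present corollary is a direct specialization of that theorem combined with the classification of $1$-truncations from Lemma \ref{LFiso}(2). The only point worth checking is that an isomorphism of $1$-truncated group laws genuinely carries non-trivial actions to non-trivial actions, which is immediate from the functoriality of the correspondences (T1)--(T2).
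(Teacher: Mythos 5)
Your proof is correct and takes essentially the same approach as the paper's: exhibit a non-trivial $F[1]$-derivation on $\Pp^1$ via the classification of $1$-truncated group laws, then apply Theorem \ref{htrtl} through Remark \ref{hdim}(1) to rule out any expansion. The only cosmetic difference is that the paper re-derives the classification from Demazure's theorem on Dieudonn\'{e} modules and uses $\htt(F)\neq 1$ to pin down $F[1]\cong\ga[1]$, whereas you cite Lemma \ref{LFiso}(2) and handle both possible isomorphism types uniformly.
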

\begin{proof} By \cite[Theorem p. 69]{Demazure}, there are (up to isomorphism) only two $k$-group scheme structures on $\spec(k[X]/(X^p))$:
$$\ga[1]:=\ker(\fr:\ga\to \ga),\ \ \ \gm[1]:=\ker(\fr:\gm\to \gm),$$
since the Dieudonn\'{e} module of such a group scheme coincides with $k$ (length 1), the Frobenius map is $0$ and the Vershiebung map is either $0$ or $1$. If $F$ is a formal group law over $k$ and $\htt(F)=1$, then $F[1]\cong \gm[1]$ (Vershiebung is $1$), otherwise $F[1]\cong \ga[1]$ (Vershiebung is $0$).

Let us assume that $\htt(F)\neq 1$ and $\htt(F)\neq \infty$. Therefore $F[1]\cong \ga[1]$ (since $\htt(F)\neq 1$). Since the standard derivation on $k[t]$ gives a non-zero derivation on $\Pp^1$, we have a non-zero $\ga[1]$-derivation $\partial$ on $\Pp^1$. Since $F[1]\cong \ga[1]$, $\partial$ is also an $F[1]$-derivation. By Remark \ref{hdim}(1), this derivation does not expand to an $F$-derivation.
\end{proof}
\begin{example}\label{ex0}
It is easy to see (reasoning as at the end of the proof of Theorem \ref{htrtl}) that for $F$ as in Corollary \ref{lastcor}, there is $m$ such that a non-zero $F[1]$-derivation on $\Pp^1$ does not expand to an $F[m]$-derivation on $\Pp^1$. One could wonder about the particular value of $m$. Considering $F$ and $\mathbb{D}:=(\partial_i)_{i<p^3}$ from Example \ref{canex} ($p=2$ there), it is easy to compute that
$$\partial_4( 1/t) = t^{10} + t^4 + t + t^{-2} + t^{-5},$$
so $k[1/t]$ is not preserved by $\mathbb{D}$, hence $\mathbb{D}$ does not extend from $\Aa^1$ to $\Pp^1$. This calculation suggests that $m=3$ (if $F[3]\ncong \ga[3]$ and $F[3]\ncong \gm[3]$).
\end{example}
\begin{remark}
Results of this section may look related to the isotriviality theorems from \cite{KP5}. However, there is a fundamental difference. In \cite{KP5}, a different type of HS-derivations on projective varieties is considered since their restriction to the base field $k$ is \emph{generic} (i.e. this restriction gives an \emph{existentially closed structure}, see e.g. \cite{Ho}). In this paper such a restriction is the 0-derivation, which is very non-generic. Clearly, the proofs are related since they both use the automorphism functor.
\end{remark}

\subsection{Higher dimensional iterativity conditions}\label{highdim}
In this subsection, we will briefly discuss the case of an action of higher-dimensional groups (different actions than those which were discussed in Remark \ref{hdim}). Let $e\geqslant 2$ and we fix an $e$-dimensional algebraic group $G$  such that
$$k(G)\cong_kk(X_1,\ldots,X_e).$$
We would like to check (as a natural starting point) whether an analogue of Theorem \ref{projintegr} is true in this context, i.e. whether the canonical $G$-derivation on $k(X_1,\ldots,X_e)$ extends to a $G$-derivation on $\Pp^e$. For simplicity, we will assume that $G=U$ is unipotent of dimension $2$. This case is analyzed in \cite{Hoff1} and the results about integrable $U$-derivations (for actions on field extensions) are proved there.

In the case of $\Pp^1$, the condition for an HS-derivation $\mathbb{D}$ on $k[X]$ to extend to $\Pp^1$ was very easy: it was enough that $\mathbb{D}$ (after extending to $k(X)$) preserves the subring $k[1/X]$. In the case of $\Pp^2$ we have two preservation conditions: an HS-derivation $\mathbb{D}$ on $k[X,Y]$ extends to $\Pp^2$ if and only if $\mathbb{D}$ (after extending to $k(X,Y)$) preserves the subrings $k[X/Y,1/Y]$ and $k[Y/X,1/Y]$. In principle, there is also one more ``cocycle condition'', but it is trivially satisfied, in the case of $\Pp^2$.

It is an easy computation to verify that both the subrings $k[X/Y,1/Y]$ and $k[Y/X,1/Y]$ are HS-subrings of $k(X,Y)$ with the canonical $\mathbb{G}_a^2$-derivation. We will see below that for $U$ which is a non-split extension of $\ga$ by $\ga$, the preservation condition fails.
\begin{example}
We take the unipotent group $U$ where the group law on $U$ is given by
 $$(X,Y)*(W,Z)=\left(X+W+\sum\limits_{i=1}^{p-1}\frac{1}{p}{p\choose i}Y^iZ^{p-i}, Y+Z \right).$$
We want to check whether $k[Y/X,1/X]$ is an HS-subring of $k(X,Y)$ equipped with the canonical $U$-derivation. However, for $p>3$ we have
 $$D_{(0,1)}(1/X)=-\frac{D_{(0,1)}(X)}{X^2}=-\frac{Y^{p-1}}{X^2} \not\in k[Y/X,1/X].$$
\end{example}

\section{Speculations on a possible answer to Question \ref{mainq}}\label{sectrun}

Motivated by Theorem \ref{htrtl}, we were trying to proceed towards a \emph{negative} answer to Question \ref{mainq}. In the previous version of this paper, we noted our (unsuccessful) attempts into this direction. However, while the paper was being refereed, some evidence appeared suggesting that the answer may be positive after all. Even when we still do not know what the answer is, we think that these developments were quite interesting and we decided to summarize them in this section.

If one looks at Matsumura's proof of integrability of $p$-nilpotent derivations \cite{Mats1} (and the proofs of other integrability results, see e.g. \cite{Tyc} and \cite{HK1}), then one sees that the proof has the following steps.
\begin{enumerate}
\item Show that the canonical $F$-derivation $(\partial_n)_{n\in \Nn}$ on $k((t))$ restricts to $k(t)$.

\item For an arbitrary $F[m]$-derivation $D$ on a field $K$, find an embedding
$$\left(k(t),(\partial_n)_{n<p^m}\right)\to (K,D).$$

\item Expand the canonical $F[m]$-derivation on $k(t)$ (regarded as an $F[m]$-subfield of $K$)  to the canonical $F$-derivation on $k(t)$ and then extend it to $K$.
\end{enumerate}
To carry on these steps, one needs to answer the following first (Step 1).
\begin{question}\label{qrest}
For which formal group laws $F$, the canonical $F$-derivation on $k\llbracket t\rrbracket$ restricts to $k[t]$ or (after extending the canonical $F$-derivation to the field of Laurent series) restricts to $k(t)$?
\end{question}
Obviously, the canonical $F$-derivation restricts to $k[t]$ if and only if  $F\in k[X]\llbracket Y\rrbracket$, but it was unclear to us for which formal group laws the latter condition holds. We have decided to test the formal group laws from \cite[(3.2.3)]{Hazew}, which are defined as follows. Let $h$ be a positive integer and
$$f_{\Delta_h}(X):=\sum_{n=0}^{\infty}\frac{1}{p^h}X^{p^{nh}}.$$
Then we get the following formal group law over $\Zz$
$$F_{\Delta_h}(X,Y):=f_{\Delta_h}^{-1}(f_{\Delta_h}(X)+f_{\Delta_h}(Y)).$$
This formal group law is obviously isomorphic (over $\Qq$) to $\gaf$. But its reduction modulo $p$, which we denote by $F_h(X,Y)$, is a formal group law over $\Ff_p$ of height $h$.
The first author has posted the following question on MathOverflow.
\begin{question}\label{quemo}
Does $F_h(X,Y)\in\mathbb{F}_p[X]\llbracket Y\rrbracket$?
\end{question}
To our surprise, Jonathan Lubin gave a positive answer \cite{LubinAns} to Question \ref{quemo} for $h\geqslant 2$. Moreover, at the same time Malkhaz Bakuradze prowed the same result in \cite{Bakur}.

Therefore, we obtain the following answer to Question \ref{qrest}.
\begin{theorem}\label{isorest}
Suppose that $k$ is an algebraically closed field and $F$ is a one-dimensional formal group over $k$. Then there is $\widetilde{F}$, a one-dimensional formal group over $k$, such that $F\cong \widetilde{F}$ and the canonical $\widetilde{F}$-derivation on $k\llbracket t\rrbracket$ restricts to $k[t]$.
\end{theorem}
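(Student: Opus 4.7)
The plan is to reduce the theorem to the classification of one-dimensional formal group laws over an algebraically closed field of characteristic $p$ by height, and then to exhibit a polynomial-in-$X$ representative in each isomorphism class.

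First I would record the key translation: since the canonical $\widetilde{F}$-derivation $(\partial_n)_n$ on $k\llbracket t\rrbracket$ satisfies $\partial_n(t)=[X^n]\widetilde{F}(t,X)$, it restricts to $k[t]$ if and only if every coefficient of $\widetilde{F}(t,X)$ viewed as a power series in $X$ lies in $k[t]$, i.e.\ exactly when $\widetilde{F}(X,Y)\in k[X]\llbracket Y\rrbracket$. Thus the theorem reformulates as: in the isomorphism class of $F$, there is a representative whose defining power series is polynomial in the first variable.

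Next, by the Lazard--Dieudonn\'e classification, one-dimensional formal group laws over an algebraically closed field of characteristic $p$ are classified up to isomorphism by their height $h\in\{1,2,\ldots\}\cup\{\infty\}$. So it is enough, for each $h$, to exhibit one representative lying in $k[X]\llbracket Y\rrbracket$ whose height is $h$. The boundary cases are immediate: take $\widetilde{F}=X+Y$ in height $\infty$ (giving $\gaf$) and $\widetilde{F}=X+Y+XY$ in height $1$ (giving $\gmf$), both being polynomials already. For finite height $h\geqslant 2$ I would take $\widetilde{F}=F_h$, the reduction modulo $p$ of the formal group law $F_{\Delta_h}$ introduced just before the statement; it has height exactly $h$ over $\Ff_p$, and hence also over $k$ after base change. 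The positive answer of Lubin and Bakuradze to Question~\ref{quemo} gives precisely $F_h(X,Y)\in\Ff_p[X]\llbracket Y\rrbracket\subseteq k[X]\llbracket Y\rrbracket$, which is the required polynomiality.

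The only genuinely nontrivial input here is the polynomial-in-$X$ statement for heights $h\geqslant 2$, and that is exactly what was unclear until the Lubin/Bakuradze results cited in the text; this is the main obstacle, and it has been dealt with externally. Modulo that, the proof is just an assembly: translate ``restricts to $k[t]$'' into ``$\widetilde{F}\in k[X]\llbracket Y\rrbracket$'', invoke classification by height over an algebraically closed $k$, and plug in the three explicit models $X+Y$, $X+Y+XY$, and $F_h$.
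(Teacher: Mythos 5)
Your proof is correct and follows essentially the same route as the paper's: reduce to the height classification of one-dimensional formal groups over an algebraically closed field (heights $1$, $\infty$, and finite $h\geqslant 2$ handled by $\gmf$, $\gaf$, and $F_h$ respectively), then invoke the Lubin/Bakuradze result that $F_h(X,Y)\in\Ff_p[X]\llbracket Y\rrbracket$. The only difference is that you spell out the translation between ``the canonical derivation restricts to $k[t]$'' and ``$\widetilde{F}\in k[X]\llbracket Y\rrbracket$'', which the paper records separately just before Question \ref{quemo}; this makes your write-up slightly more self-contained but is not a different argument.
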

\begin{proof}
If the height of $F$ is $1$ (resp. $\infty$), then $F\cong \gmf$ (resp. $F\cong \gaf$) and the theorem holds. For $F$ of finite height $h\geqslant 2$, we have $F\cong F_h$ (see \cite[Theorem 21.9.1]{Hazew}). By Lubin's answer \cite{LubinAns} to Question \ref{quemo}, the canonical $F_h$-derivation restricts to $k[t]$.
\end{proof}
Below we collect some observations about possible $F$-derivations on algebraic varieties. We are unable to say whether there is some kind of a general geometric picture emerging from them.
\begin{remark}
We assume that $k$ is an algebraically closed field. Let $V$ be an algebraic variety over $k$ and $F$ be a one-dimensional formal group law over $k$.
\begin{enumerate}
\item If $V=\Aa^1$, then there is a non-trivial $F$-derivation on $V$ (it follows from Theorem \ref{isorest}).

\item If $V=\Pp^1$ and if there is a non-trivial $F$-derivation on $V$, then $F\cong \gaf$ or  $F\cong \gmf$ (see Remark \ref{hdim}(1)).

\item There is an Abelian variety $V$ and a non-trivial $F$-derivation on $V$ (see Remark \ref{hdim}(2)).
\end{enumerate}
\end{remark}
We finish this section with a concrete example of an $F_2[3]$-derivation. This example is also needed for Example \ref{ex0}.
\begin{example}\label{canex}
It can be checked that the formulas below
$$\partial_1(t)=1,\ \partial_2(t)=t^2,\ \partial_3(t)=0,\ \partial_4(t)=t^6+t^{12},$$
$$\partial_5(t)=0,\ \partial_6(t)=t^4,\ \partial_7(t)=0$$
give an $F_2[3]$-derivation on $\Ff_2[t]$.
\end{example}

\bibliographystyle{plain}
\bibliography{harvard}

\end{document}